\newtheorem{theorem}{Theorem}[section]
\newtheorem{lemma}[theorem]{Lemma}
\newtheorem{cor}[theorem]{Corollary}
\newtheorem*{Theorem1'}{Theorem 1'}
\theoremstyle{definition}
\theoremstyle{remark}
\newcommand \GL{{\mathrm{GL}}}
\newcommand \Z{{\mathbb Z}}
\newcommand \Q{{\mathbb Q}}
\newcommand \N{{\mathbb N}}
\begin{document}

\title[Linear systems of diophantine equations]{Linear systems of diophantine equations}

\author{F. Szechtman}
\address{Department of Mathematics and Statistics, University of Regina, Canada}
\email{fernando.szechtman@gmail.com}
\thanks{The author was supported in part by an NSERC discovery grant}

\subjclass[2020]{11D04, 15A06}



\keywords{Linear system; Diophantine equation; Smith normal form}

\begin{abstract} Given free modules $M\subseteq L$ of finite rank $f\geq 1$ over a principal ideal domain $R$, we
give a procedure to construct a basis of $L$ from a basis of $M$ assuming the invariant factors
or elementary divisors of $L/M$ are known. Given a matrix $A\in M_{m,n}(R)$ of rank $r$, its nullspace~$L$ in $R^n$
is a free $R$-module of rank~$f=n-r$. We construct a free submodule $M$ of $L$ of rank~$f$ naturally associated to $A$
and whose basis is easily computable, we determine the
invariant factors of the quotient module $L/M$, and then indicate how to apply the previous procedure to build a basis of $L$ from one of $M$.
\end{abstract}

\maketitle

\section{Introduction}

Let $R$ be a principal ideal domain. Given $f\in\N$, by a lattice of rank $f$ we understand a free $R$-module $L$ of rank $f$. By a sublattice of $L$ we mean
a submodule $M$ of $L$, necessarily free, also of rank~$f$. In this case, $L/M$ is a finitely generated torsion $R$-module.

In different settings, we may face the problem of having to construct
a basis of $L$ from a known basis $\{u_1,\dots,u_f\}$ of $M$. A prime example occurs when $R=\Z$, $L={\mathcal O}_K$, the ring
of integers of an algebraic number field $K$ of degree $f$ over $\Q$, $M=\Z[\theta]$, and $\{u_1,\dots,u_f\}=\{1,\theta,\dots,\theta^{f-1}\}$,
where $\theta\in {\mathcal O}_K$ is chosen so that $\Q[\theta]=K$.

A general procedure to construct
a basis of $L$ from a known basis $\{u_1,\dots,u_f\}$ of $M$ is available to us, provided we know the index $i(M,L)$ of $M$ in $L$,
which is the determinant of the matrix whose columns are the coordinates of
any basis of $M$ relative to any basis of $L$. This is determined up to multiplication by units only. Note that if
$R=\Z$, then $|i(M,L)|$ is the order of the finite abelian group $L/M$.

If $i(M,L)=1$ then $L=M$ has basis $\{u_1,\dots,u_f\}$. Suppose next $i(M,L)\neq 1$ and let $p\in R$ be a prime factor of $i(M,L)$. Then $L/M$ has a cyclic submodule isomorphic to $R/Rp$, so there exist $a_1,\dots,a_f\in \Z$, such that
\begin{equation}
\label{p}
v=\frac{a_1u_1+\cdots+a_f u_f}{p}\in L\setminus M.
\end{equation}
Since $v\notin M$, we have $p\nmid a_i$ for some $i$ and we assume for notational convenience that $i=1$.
Since $\gcd(p,a_1)=1$, we can find $x,y\in R$ such that $xa_1+yp=1$. Here $ypv\in M$, so $$xa_1v=(1-yp)v=v-ypv\notin M,$$
whence $xv\notin M$ and a fortiori $v'=xv+yu_1\in L\setminus M$, where
$$
v'=\frac{x(a_1u_1+\cdots+a_f u_f)+pyu_1}{p}=\frac{u_1+xa_2u_2+\cdots+xa_f u_f}{p}.
$$
Thus, replacing $v$ by a suitable $R$-linear combination of itself and $u_1$, namely $v'$, we may suppose that $a_1=1$ in (\ref{p}).
Then $\{v,u_2,\dots,u_f\}$ is a basis for a sublattice, say $P$, of $L$ such that
$M\subset P$ and $i(M,P)=p$, so $i(P,L)=i(M,L)/p$. Repeating this process we eventually arrive at a basis of~$L$.

In this paper we modify and improve this procedure, provided the invariant factors or elementary divisors of $L/M$ are known,
and we illustrate the use of this method with a concrete problem.

Indeed, let $F$ be the field of fractions of $R$.
Given a matrix $A\in M_{m,n}(R)$, we write $T$
for the nullspace of $A$ in $F^n$, so that $S=T\cap R^n$ is the nullspace of $A$ in $R^n$.
We note that $L=S$ is a lattice of rank $f=n-r$, where $r$ is the rank of $A$.

It is easy to find an $F$-basis of $T$ from the reduced row echelon form, say $E$, of $A$.
It is not clear at all how to use $E$ to produce an $R$-basis of $S$. To achieve this, we first
identify a sublattice $M$ of $L$ as well as a basis of $M$, naturally, in terms of $E$; we then compute the complete structure of the $R$-module $L/M$, namely its invariant factors, whose product is equal to $i(M,L)$; we finally indicate
how to build a basis of $L$ from the given basis of $M$ by making use of the full structure of $L/M$.

Now if $r=n$ then $E$ consists of the first $n$ canonical vectors of $R^m$, and $L=0$. On the other hand, if $r=0$ then $E=0$ and $L=R^n$.
None of these cases is of any interest, so we assume throughout that $0<r<n$.

In \S\ref{ini}, \S\ref{wpq}, \S\ref{eachother}, and~\S\ref{main}, we use $E$ to naturally produce a lattice $U$ of rank $f$, a basis of $U$, and a nonzero scalar $d\in R$ such that $M=dU$ is a sublattice of $L$, and $L$ is a sublattice of $U$. Moreover, we compute
the full structures of $L/M$ and $U/L$. Furthermore, in \S\ref{algo} we indicate how to use either the invariant factors or the elementary divisors of $L/M$ to construct a basis of $L$ from one of $M$ (this is done for arbitrary $L$ and $M$).
In addition, if $d=p$ is a prime, we indicate in~\S\ref{dp} how to produce a basis of $L$ more or less directly from one of~$U$. Examples can be found in \S\ref{exa}.

We may summarize our study of $S$ as follows: given the lattice $S$ of all solutions of $AX=0$ in $R^n$, we approximate $S$ from below by a
naturally occurring lattice of solutions $M$ in $R^n$, we determine how far
$M$ is from $S$, and we describe how to bridge the gap between them. A like approach was recently utilized in \cite{QSS} in the special case
of a single linear homogeneous equation, that is, when $m=1$, except that in \cite{QSS} the approximation was taken from above, by means of~$U$.
The case $m=1$ is necessarily simpler than the general case addressed here, as much in the computation of the structures of $U/S$ and $S/M$
as in the passage from a basis of a lattice to a basis of $S$, where the material from \S\ref{algo} not required.

As is well-known (see the note at the end of \cite[\S 4]{QSS} in the special case $m=1$), we may also find a basis of $S$ by appealing to the Smith normal form $D$ of $A$. There are $P\in\GL_m(R)$ and $Q\in\GL_n(R)$ such that $D=PAQ$. It is then trivial
to find a basis, say ${\mathcal B}$, of the nullspace of~$D$, whence $Q{\mathcal B}$ is a basis of the nullspace of $A$. This approach gives no information
whatsoever on how far naturally occurring lattices of solutions of $AX=0$ are from $S$, as provided in Theorem \ref{quot}, or how to expand or shrink these lattices to reach $S$, as expounded in \S\ref{algo} or \cite[Theorem 4.5]{QSS}.


Most of the literature on systems of linear diophantine equations is naturally focused on the case $R=Z$. One significant body of work
is focused on non-negative solutions, with applications to linear programming and combinatorial optimization. See \cite{CD},
\cite{CF},  \cite{CKO}, \cite{GK}, \cite{PV}, \cite{S}, and references therein. 

Regarding lattices over the integers and their bases, a large body of literature is concerned with lattice basis reduction, which
takes as input a basis of a lattice and aims at producing as output a new basis of the same lattice with vectors that are
short and nearly orthogonal. A celebrated algorithm of this kind is the LLL algorithm \cite{LLL}, which has a wide range of applications, such
as in cryptanalysis, algorithmic number theory, factorization of polynomials with rational coefficients, integer linear programming, and many more. See the reference book \cite{PV2} for comprehensive information on this subject.


\section{reduced matrices}\label{ini}

A matrix $Z\in M_{m,n}(R)$ of rank $r$ is said to be {\em reduced} if there are $0\neq d\in R$ and $K\in M_{r,f}(R)$ such that
\begin{equation}
\label{ready}
Z=\left(\begin{array}{cc} d I_r & K\\ 0 & 0
\end{array}\right).
\end{equation}
Two matrices $B,C\in M_{m,n}$ are {\em associated} if there there are $L\in\GL_m(F)$ and $\Sigma\in\GL_n(F)$
such that $\Sigma$ is a permutation matrix and $LB\Sigma=C$. This is clearly an equivalence relation.

\begin{lemma}\label{lista} The given matrix $A$ is associated to a reduced matrix.
\end{lemma}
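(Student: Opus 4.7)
The plan is to exploit the fact that we are allowed to use $\GL_m(F)$ rather than $\GL_m(R)$, which makes the reduction essentially a matter of Gauss--Jordan elimination combined with clearing denominators.

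First I would pass to the reduced row echelon form of $A$ over the field $F$. Since $F$ is a field, ordinary Gaussian elimination produces a matrix $L_0 \in \GL_m(F)$ such that $E = L_0 A$ is in reduced row echelon form. By definition of RREF, there are pivot column indices $j_1 < j_2 < \cdots < j_r$ in $\{1,\dots,n\}$ such that column $j_i$ of $E$ equals the $i$-th canonical vector of $F^m$, and the last $m-r$ rows of $E$ are zero.

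Next I would use a column permutation to collect the pivot columns at the front. Let $\Sigma \in \GL_n(F)$ be the permutation matrix that sends column $j_i$ to column $i$ for $1\le i\le r$ (in some chosen order on the remaining columns). Then
\[
L_0 A \Sigma \;=\; E\Sigma \;=\; \begin{pmatrix} I_r & K' \\ 0 & 0 \end{pmatrix}
\]
for some $K' \in M_{r,f}(F)$.

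Finally I would clear denominators. Since $F$ is the field of fractions of $R$ and $K'$ has only finitely many entries, there exists $0 \ne d \in R$ such that $K := dK' \in M_{r,f}(R)$. Setting $L = d L_0 \in \GL_m(F)$ (the scalar matrix $dI_m$ lies in $\GL_m(F)$, and the product of two elements of $\GL_m(F)$ is again in $\GL_m(F)$), we obtain
\[
LA\Sigma \;=\; \begin{pmatrix} dI_r & dK' \\ 0 & 0 \end{pmatrix} \;=\; \begin{pmatrix} dI_r & K \\ 0 & 0 \end{pmatrix},
\]
which is the desired reduced form \eqref{ready}. There is no real obstacle here; the only point worth flagging is that one must work with $\GL_m(F)$ rather than $\GL_m(R)$, since RREF typically introduces denominators from $F$, which is precisely why the definition of ``associated'' allows matrices in $\GL_m(F)$ and $\GL_n(F)$.
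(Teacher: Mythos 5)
Your proof is correct and follows exactly the paper's approach: pass to the reduced row echelon form over $F$, permute the pivot columns to the front, and clear denominators by a common multiple $d$. The paper's own proof is a two-line version of this same argument.
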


\begin{proof} Let $Y\in M_{m,n}(F)$ be the reduced row echelon form of $A$. Multiplying $Y$ by suitable element of $R$
and permuting the columns of resulting matrix yields a reduced matrix associated to $A$.
\end{proof}

For the remainder of the paper we fix a reduced matrix $Z$  associated to $A$, say via
that $LA\Sigma=Z$, and write $J=(d I_r\, K)\in M_{r,n}(R)$ for the matrix obtained from $Z$ by eliminating its last $m-r$ rows.
We let $N$ stand for the nullspace of $J$ in $R^n$, so that $S=\Sigma N$ (thus, up to permutation of the variables $X_1,\dots,X_n$,
our linear system is $JX=0$).

\section{Choice of a lattice}\label{wpq}

The linear system $JX=0$ reads as follows:
$$
d X_{1}=-(K_{1,1}X_{r+1}+\cdots+K_{1,f}X_{n}),
$$
$$
dX_{2}=-(K_{2,1}X_{r+1}+\cdots+K_{2,f}X_{n}),
$$
$$
\vdots
$$
$$
dX_{r}=-(K_{r,1}X_{r+1}+\cdots+K_{r,f}X_{n}).
$$
Consider the $f$ vectors $V(1),\dots,V(f)\in F^n$ and defined as follows:
\begin{equation}
\label{vi}
V(1)=\left(\begin{array}{c} -\frac{K_{1,1}}{d}  \\ \vdots \\ -\frac{K_{r,1}}{d} \\ 1 \\ 0 \\ \vdots \\ 0\end{array}\right),
\dots,
V(f)=\left(\begin{array}{c} -\frac{K_{1,f}}{d}  \\ \vdots \\ -\frac{K_{r,f}}{d} \\  0 \\  \vdots \\ 0 \\  1\end{array}\right).
\end{equation}
It is clear that $\{V(1),\dots,V(f)\}$ is an $F$-basis of the nullspace of $J$ in $F^n$. We set
$$
W=\mathrm{span}_R\{V(1),\dots,V(f)\},
$$
so that $\{V(1),\dots,V(f)\}$ is an $R$-basis of $W$. We thus have
\begin{equation}
\label{faltaba}
d W\subseteq N\subseteq W,
\end{equation}
and we aim to determine the structure of the factors
$$
N/dW\text{ and }W/N,
$$
where
$$
W/dW\cong (R/Rd)^{f}.
$$

Given $\alpha_1,\dots,\alpha_{f}\in F$, we have
\begin{equation}\label{cong}
\alpha_1 V(1)+\cdots+\alpha_{f}V(f)\in N\Leftrightarrow\alpha_1,\dots,\alpha_{f}\in R\text{ and }
\alpha_1 K_{i,1}+\cdots+\alpha_{f} K_{i,f}\equiv 0\mod d,\;1\leq i\leq r.
\end{equation}
Thus, we have an isomorphism $R^{f}\to W$ given by
$$(\alpha_1,\dots,\alpha_{f})\mapsto \alpha_1 V(1)+\cdots+\alpha_{f}V(f),$$
and $N$ corresponds to the submodule, say $Y$, of $R^{f}$ of all $(\alpha_1,\dots,\alpha_{f})$ such that
the right hand side of (\ref{cong}) holds. In particular, $W/N \cong R^f/Y$.

\section{Each of $N/dW$ and $W/N$ determines the other}
\label{eachother}

By the theory of finitely generated modules over a principal ideal domain, there is a basis $\{u_1,\dots,u_{f}\}$ of $W$ and non-zero elements
$a_1,\dots,a_f\in R$ such that
$$
a_1|\cdots|a_f|d
$$
and
$\{a_1u_1,\dots,a_f u_{f}\}$ is a basis of $N$. Since $\{du_1,\dots,d u_{f}\}$ is a basis of $dW$, we see that
$$
W/N\cong (R/Ra_1)\oplus\cdots\oplus (R/Ra_f)\text{ and }N/dW\cong (R/Rb_f)\oplus\cdots\oplus (R/Rb_1),
$$
where
$$
b_i=d/a_i,\quad 1\leq i\leq f,
$$
and
$$
b_f|\cdots|b_1.
$$
As $d$ is fixed, we see that $N/dW$ and $W/N$ determine each other.

\section{Structures of $W/N$ and $N/dW$}
\label{main}

Set $\overline{R}=R/Rd$ and consider the homomorphism of $R$-modules
$$
\Delta:R^f\to R^r\to \overline{R}^r
$$
given by
$$
\alpha\mapsto \overline{K}\overline{\alpha},
$$
where $\alpha=(\alpha_1,\dots,\alpha_f)$, and $\overline{K}$ and $\overline{\alpha}$ are the reductions of $K$ and $\alpha$ modulo $Rd$.
Then (\ref{cong}) shows that the kernel of $\Delta$ is $Y$. Thus
$$
W/N\cong R^f/Y\cong \Delta(R^f)\cong C(\overline{K}),
$$
where $C(\overline{K})$ is the column space of $\overline{K}$, namely the $\overline{R}$-span of the columns of $\overline{K}$.

Consider the natural epimorphism of $R$-modules $\Lambda:R^r\to\overline{R}^r$ with kernel $(Rd)^r$. Then $\Lambda$
restricts to an epimorphism of $R$-modules $\Omega:C(K)\to C(\overline{K})$ with kernel $C(K)\cap (Rd)^r$.

Let $Q=\mathrm{diag}(q_1,\dots,q_s)$ be the Smith Normal Form of $K$, where $q_1|\cdots|q_s$ and $s=\min\{r,n-r\}$,
and let $t$ be the rank of $K$, so that $t=0$ if and only if $K=0$.

If $K=0$ then (\ref{cong}) implies that $W=N$ and a fortiori
$$N/d W\cong W/d W\cong \overline{R}^{f}.$$

Suppose next $K\neq 0$. Then $t$ is the last index such that $q_t\neq 0$ and from the theory of
finitely generated modules over a principal ideal domain, there is a basis $\{u_1,\dots,u_r\}$ of $R^r$ such that
$\{q_1 u_1,\dots,q_t u_t\}$ is a basis for $C(K)$.
Notice that
$$
C(K)\cap (Rd)^r=(Rq_1u_1\oplus\cdots\oplus Rq_tu_t)\cap (Rdu_1\oplus\cdots\oplus Rdu_r)=
R\,\mathrm{lcm}(d,q_1)u_1\oplus\cdots\oplus R\,\mathrm{lcm}(d,q_t)u_t,
$$
so that
$$
W/N\cong C(\overline{K})\cong C(K)/(C(K)\cap (Rd)^r)\cong (Rq_1u_1\oplus\cdots\oplus Rq_tu_t)/
(R\,\mathrm{lcm}(d,q_1)u_1\oplus\cdots\oplus R\,\mathrm{lcm}(d,q_t)u_t).
$$
Since
$$
\mathrm{lcm}(d,q_i)/q_i=d/\gcd(d,q_i),\quad 1\leq i\leq t,
$$
setting
$$
m_i=\mathrm{lcm}(d,q_i)/q_i,\; d_i=d/\gcd(d,q_i),\quad 1\leq i\leq t,
$$
we infer
\begin{equation}
\label{form}
W/N\cong R/R m_t\oplus\cdots\oplus R/R  m_1\cong
R/R d_t\oplus\cdots\oplus R/R d_1.
\end{equation}
Adding $f-t$ zero summands to the right hand side of (\ref{form}), we may write
$$
W/N\cong (R/R\cdot 1)^{f-t}\oplus R/R d_t\oplus\cdots\oplus R/R d_1.
$$
We finally deduce from \S\ref{eachother} the sought formula:
\begin{equation}
\label{formula}
N/Wd\cong R/R\gcd(d,q_1)\oplus\cdots\oplus R/R\gcd(d,q_t)\oplus (R/R d)^{f-t}.
\end{equation}

Dividing every entry of $Z$ by $g=\gcd\{d,K_{ij}\,|\, 1\leq i\leq r, 1\leq j\leq f\}$ we may assume that $g=1$,
which translates into $\gcd(d,q_1)=1$. In this case, if $r=1$ then (\ref{form}) and (\ref{formula}) reduce to the corresponding formulas from \cite[Theorems 4.1 and 3.2]{QSS}, respectively.

Notice that (\ref{form}) and (\ref{formula}) remain valid when $K=0$.

Set $U=\Sigma W$, with $\Sigma$ as in \S\ref{ini}, and let $M=d U$. We have an isomorphism $W\to U$, given by $X\mapsto \Sigma X$,
yielding isomorphisms $W/N\to U/S$ and $N/d W\to S/M$. We have thus proved the following result.

\begin{theorem}\label{quot}
\label{W} Let $A\in M_{m,n}(R)$, with rank $0<r<n$ and nullspace $S$ in $R^n$.
Let $Z$ be a reduced matrix associated to $A$, as in (\ref{ready}), say via $LA\Sigma=Z$.
Let $W$ be the free $R$-module of rank $n-r$ corresponding to $Z$ as defined in \S\ref{wpq},
and set $U=\Sigma W$ and $M=dU$.
Then $M\subseteq S\subseteq U$, where $U/S\cong W/N$ and $S/M\cong N/dW$ are as described in (\ref{form}) and (\ref{formula}), respectively.
\end{theorem}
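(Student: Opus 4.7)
The plan is to exploit the fact that the permutation matrix $\Sigma$ induces an $R$-linear automorphism of $R^n$, which will transport all the structural results already established for $W$, $N$, and $dW$ in \S\ref{wpq}--\S\ref{main} to their images $U = \Sigma W$, $S = \Sigma N$, and $M = \Sigma dW$. The heavy lifting has already been done; the theorem is essentially a repackaging via the change of variables induced by $\Sigma$.

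First I would establish the inclusion chain $M \subseteq S \subseteq U$. Starting from (\ref{faltaba}), namely $dW \subseteq N \subseteq W$, and applying the $R$-linear map $X \mapsto \Sigma X$, we get $\Sigma dW \subseteq \Sigma N \subseteq \Sigma W$. Since scalar multiplication commutes with the matrix action, $\Sigma dW = d\Sigma W = dU = M$; by definition $\Sigma W = U$; and $\Sigma N = S$ was recorded already in \S\ref{ini}. The chain follows at once.

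Next, to obtain the quotient isomorphisms, I would observe that because $\Sigma$ is a permutation matrix, it lies in $\GL_n(R)$, so the map $X \mapsto \Sigma X$ is an $R$-module automorphism of $R^n$. Its restriction is an isomorphism $W \to U$ that carries $N$ bijectively onto $S$ and $dW$ bijectively onto $M$. By the standard homomorphism theorems this induces $R$-module isomorphisms $W/N \to U/S$ and $N/dW \to S/M$. Combining these with (\ref{form}) and (\ref{formula}) yields the stated descriptions of $U/S$ and $S/M$.

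There is essentially no obstacle here: the real work is in the structural computations of the previous sections, and this final step is bookkeeping. The only point worth flagging is that $\Sigma$ must be viewed as an element of $\GL_n(R)$, not merely $\GL_n(F)$; this is automatic since permutation matrices have entries in $\{0,1\} \subseteq R$ and determinant $\pm 1$, and it is precisely what guarantees that $\Sigma$ moves the $R$-lattices $dW, N, W$ bijectively onto the $R$-lattices $M, S, U$, rather than merely matching up their $F$-spans.
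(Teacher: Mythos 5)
Your proposal is correct and matches the paper's own argument: the paper likewise notes that $X\mapsto\Sigma X$ gives an isomorphism $W\to U$ carrying $N$ to $S$ and $dW$ to $M$, so the inclusions and quotient isomorphisms follow from (\ref{faltaba}), (\ref{form}), and (\ref{formula}). Your added remark that $\Sigma\in\GL_n(R)$ (not merely $\GL_n(F)$) is a worthwhile clarification but does not change the route.
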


\begin{cor}\label{nada}  We have $U=S$ if and only if $d$ divides every entry of $K$, and $S=M$ if and only if $\gcd(d,q_i)=1$, $1\leq i\leq t$,
and either $d$ is a unit or $K$ has rank $f$.
\end{cor}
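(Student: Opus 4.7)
The plan is to reduce both assertions to the corresponding statements about $N$, $W$, $dW$ inside $F^n$ via the permutation isomorphism $X\mapsto\Sigma X$, and then read off the answer from (\ref{form}) and (\ref{formula}).

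First I would record the trivial observation that, since $U=\Sigma W$, $M=dU=\Sigma(dW)$, and $S=\Sigma N$, the equality $U=S$ holds if and only if $W=N$, and $S=M$ holds if and only if $N=dW$. From the chain (\ref{faltaba}), $W=N$ is equivalent to $W/N=0$, and by Theorem \ref{quot} (or, more directly, (\ref{form})) this is equivalent to $C(\overline{K})=0$, i.e.\ $\overline{K}=0$; this is precisely the statement that $d$ divides every entry of $K$. (One may also see this from (\ref{cong}) applied to $\alpha=e_j$: $V(j)\in N$ iff $d\mid K_{i,j}$ for every $i$.)

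For the second claim, $S=M$ is equivalent to $N/dW=0$, so I would turn to the decomposition (\ref{formula}):
$$
N/dW\cong R/R\gcd(d,q_1)\oplus\cdots\oplus R/R\gcd(d,q_t)\oplus (R/Rd)^{f-t}.
$$
This module vanishes if and only if each summand vanishes. The first $t$ summands vanish exactly when $\gcd(d,q_i)$ is a unit for each $i$, which (up to units) is the condition $\gcd(d,q_i)=1$ for $1\leq i\leq t$. The remaining $f-t$ copies of $R/Rd$ vanish exactly when either $f=t$ or $Rd=R$; since $t$ is the rank of $K$, $f=t$ means $K$ has rank $f$, and $Rd=R$ means $d$ is a unit. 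Combining these two conditions yields the stated equivalence.

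The only mildly delicate point is the case $K=0$, where $t=0$ and the first family of summands is empty; the remark after (\ref{formula}) guarantees that the formula still applies, so the condition on the $\gcd(d,q_i)$'s is vacuously satisfied and the criterion collapses to ``$d$ is a unit or $f=0$'', consistent with the general statement. I do not anticipate any serious obstacle; the proof is essentially an unpacking of Theorem \ref{quot} together with the elementary fact that $R/Ra=0$ iff $a$ is a unit in $R$.
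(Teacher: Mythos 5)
Your proposal is correct and is essentially the paper's own argument: the paper simply states that the corollary ``follows immediately from (\ref{form}) and (\ref{formula})'', and your write-up is the straightforward unpacking of that, reading off when $W/N$ and $N/dW$ vanish summand by summand (together with the transport along $\Sigma$). No gaps; the edge case $K=0$ is handled correctly.
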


\begin{proof} This follows immediately from (\ref{form}) and (\ref{formula}).
\end{proof}

\section{An improved procedure to construct a basis of $L$}\label{algo}

Here we go back to the general case and suppose that $L$ is an arbitrary lattice of rank $f$ with a proper sublattice $M$.
We assume that the list of invariant factors or elementary divisors of $L/M$ is known, and we wish to use one list or the other
to improve the process indicated in the Introduction to obtain a basis of $L$ from a given basis $\{u_1,\dots,u_f\}$ of $M$.

Let $g_1,\dots,g_s\in R$ be the unique elements, up to multiplication by units,
such that $g_1$ is not a unit, $g_s$ is not zero,
$g_1|\cdots|g_s$, and
\begin{equation}\label{sm}
L/M\cong R/Rg_1\oplus\cdots\oplus R/Rg_s.
\end{equation}
Here $i(M,L)=g_1\cdots g_s$, and we will use all of $g_1,\dots,g_s$ instead of $i(M,L)$
to obtain a basis of~$L$. The idea is to advance one invariant factor of $L/M$ at a time,
rather than one prime factor of $i(M,L)$ at a time.

According to (\ref{sm}), $S/M$ has a vector with annihilating ideal $Rg_s$. This means that there
are $a_1,\dots,a_f\in R$ such that the following extension of  (\ref{p}) holds:
\begin{equation}\label{vg}
v=\frac{a_1u_1+\cdots+a_f u_f}{g_s}\in L\text{ but }hv\notin M\text{ for any proper factor }h\text{ of }g_s.
\end{equation}
In particular, $Rv\cap M=Rg_sv$, and we set $P=Rv+M$. Thus $$P/M\cong Rv/(Rv\cap M)\cong R/Rg_s$$ is a submodule of~$L/M$.
On the other hand, it is well-known \cite[Lemma 6.8 and Theorem 6.7]{H} that any cyclic submodule of $L/M$ with annihilating ideal $Rg_s$ is complemented in $L/M$. The uniqueness of the invariant factors of $L/M$ implies that
$$
S/P\cong R/Rg_1\oplus\cdots\oplus R/Rg_{s-1}.
$$
Thus, if we can provide a way to produce a basis of $P$ from a  basis of $M$, then successively applying the above procedure
with $g_s,g_{s-1},\dots,g_1$ will yield a basis of $L$. We next indicate two ways to construct a basis of $P$ from
$\{u_1,\dots,u_f\}$ and $v$. Set $v=u_{f+1}$ and $a_{f+1}=-g_s$. Then from the first condition in (\ref{vg}), we have
\begin{equation}\label{rel}
a_1u_1+\cdots+a_f u_f+a_{f+1}u_{f+1}=0,
\end{equation}
while the second condition in (\ref{vg}) implies
\begin{equation}\label{gc}
\gcd(a_1,\dots,a_f,a_{f+1})=1.
\end{equation}

In the first way, set $a=(a_1,\dots,a_{f+1})$ and let $u$ be the column vector with vector entries $(u_1,\dots,u_{f+1})$.
Using an obvious notation, (\ref{rel}) means  $au=0$. Moreover, from (\ref{gc}) we infer the existence of $Q\in\GL_{f+1}(R)$
such that $aQ=(1,0,\dots,0)$. Setting $v=Q^{-1}u$, we have
$$
0=au=aQQ^{-1}u=(1,0,\dots,0)v.
$$
Now $v$ is a column vector, say with vector entries $(v_1,\dots,v_{f+1})$, where $v_1=0$. But $v=Q^{-1}u$ ensures
that the entries of $u$ and $v$ have the same span. Since $P$ is a lattice of rank $f$, it follows that the $f$ spanning
vectors $v_2,\dots,v_{f+1}$ must form a basis of $P$.

For the second way, we assume that $R$ is an Euclidean domain. Thus, $R$ is an integral domain endowed with a function $\delta:R\to \Z_{\geq 0}$
such that given any $a,b\in R$ with
$b\neq 0$ there are $q,r\in R$ such that $a=bq+r$, with $r=0$ or $\delta(r)<\delta(b)$. We may then use (\ref{gc}) and
the Euclidean algorithm to transform (\ref{rel}) into
\begin{equation}\label{rel2}
b_1v_1+\cdots+b_f v_f+v_{f+1}=0,
\end{equation}
where $u_1,\dots,u_{f+1}$ and $v_1,\dots,v_f$ span the same module. As above, this implies that $\{v_1,\dots,v_f\}$ is a basis of $P$.
We briefly describe the foregoing transformation. Choose $1\leq i\leq f+1$ such that $a_i\neq 0$ with
$\delta(a_i)$ is as small as possible. For notational convenience, let us assume that $i=f+1$. Dividing every
other $a_j$ by $a_{f+1}$, we obtain $a_j=q_ja_{f+1}+r_j$, where $r_j=0$ or $\delta(r_j)<\delta(a_j)$, $1\leq j\leq f$.
If every $r_j=0$ then (\ref{gc}) forces $a_{f+1}$ to be a unit, so dividing (\ref{rel}) by
$a_{f+1}$ we obtain (\ref{rel2}). Suppose at least one $r_j\neq 0$. We can re-write (\ref{rel}) in the form
$$
r_1u_1+\cdots+r_f u_f+a_{f+1}(q_1u_1+\cdots +q_f u_f+u_{f+1})=0,
$$
where $u_1,\dots,u_f,u_{f+1}$ and $u_1,\dots,u_f,q_1u_1+\cdots +q_f u_f+u_{f+1}$ span the same module, $r_j\neq 0$,
$\delta(r_j)<\delta(a_{f+1})$, and $\gcd(r_1,\dots,r_f,a_{f+1})=1$. Since $\delta$ takes only non-negative values,
repeating this process we must eventually arrive to a unit remainder, as required for (\ref{rel2}).

We next indicate how to use the elementary divisors of $L/M$ instead of its invariant factors to construct a basis of $L$.
There are more of the former than of the latter, but this is be balanced by the fact that each intermediate basis is more easily found.
Let $p\in R$ be a prime, $1\leq e_1\leq\cdots\leq e_k$, and suppose that $p^{e_1},\dots,p^{e_k}$ are the  $p$-elementary divisors of $L/M$.
Set $e=e_k$. Then $L/M$ has a vector with annihilating ideal $Rp^{e}$, which translates as follows. There are $a_1,\dots,a_f\in R$ such that the following extension of (\ref{p}) holds:
\begin{equation}
\label{vto}
v=\frac{a_1u_1+\cdots+a_f u_f}{p^{e}}\in L\text{ but }p^{e-1}v\notin M.
\end{equation}
By \cite[Lemma 6.8]{H}, any vector of $L/M$ with annihilating ideal $Rp^{e}$
has a complement in $L/M$. Thus, the preceding procedure applies, except that now we advance one $p$-elementary divisor of $L/M$
at a time. In this case, however, it is easier to pass from a basis to the next one. Indeed, since $p^{e-1}v\notin M$, we must have
$p\nmid a_i$ for some $i$, and the same argument given in the Introduction produces a basis of the span of $v,u_1,\dots,u_f$
from the basis $\{u_1,\dots,u_f\}$ of~$M$.

We finally indicate how to apply the above procedure when $L=S$, $M=d U$, and we take $\{u_1,\dots,u_f\}=\Sigma\{dV(1),\dots,dV(f)\}$.
The invariant factors of $S/M\cong N/d W$ are given in Theorem \ref{quot}, and we can obtain from
these corresponding the elementary divisors. Furthermore, Corollary \ref{nada} makes it clear when $S=M$. Observe that we can replace $L$ in (\ref{vg}) and (\ref{vto}) by $R^n$, for in that case $v\in F^n\cap T=S$.

\section{The case when $d$ is a prime}\label{dp}

We assume throughout this section that $d=p$ is a prime and set $\overline{R}=R/Rp$. In this case,
a sharpening of (\ref{form}) and (\ref{formula}) is available, and we can obtain a basis of $N$, and hence of $S=\Sigma N$, directly,
without having to resort to the procedure outlined in \S\ref{algo}.
It follows from (\ref{faltaba}) that all of $W/p W$, $W/N$ and $N/p W$ are $\overline{R}$-vector spaces, and hence completely determined by
their dimensions. Let
$\overline{K}$ be the reduction of $K$ modulo $p$. Then $W/p W\cong \overline{R}^{f}$; $N/p W$ isomorphic to the nullspace of
$\overline {K}$ by \S\ref{wpq}; and $W/N$ is isomorphic to the column space of $\overline{K}$
by \S\ref{main}. Thus
\begin{equation}
\label{s}
\dim W/N=\mathrm{rank}\, \overline{K},\; \dim N/p W=f-\mathrm{rank}\, \overline{K}.
\end{equation}
This formula is compatible with the isomorphism
$$
W/N\cong (W/p W)/(N/p W).
$$
Moreover, a careful examination of (\ref{s}) reveals that, as expected, it is in agreement with (\ref{form}) and~(\ref{formula}).

Next we show how to obtain a basis of $N$ directly from the basis $\{V(1),\dots,V(f)\}$ of $W$.
Let $H\in M_{r,f}(R)$ be such that $\overline{H}$
is the reduced row echelon form of $\overline{K}$. For simplicity of notation, let us assume that the leading
columns of $\overline{H}$ are columns $1,\dots,s$.

\begin{theorem} Consider the vectors
$$
z_{1}=pV(1),\dots,z_s=pV(s),
$$
and if $s<f$ also the vectors
$$
z_{s+i}=-(H_{1,s+i}V(1)+\cdots+H_{s,s+i}V(s))+V(s+i),\quad 1\leq i\leq f-s.
$$
Then $\{z_1,\dots,z_f\}$ is a basis of $N$ (if $s=0$ $\{z_1,\dots,z_f\}$ is simply $\{V(1),\dots,V(f)\}$).
\end{theorem}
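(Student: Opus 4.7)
The plan is to work in $R^f$ via the isomorphism $R^f\to W$ of \S\ref{wpq}, under which $N$ corresponds to the submodule $Y$ of those $\alpha\in R^f$ satisfying $\overline{K}\overline{\alpha}=0$ in $\overline{R}^r$. In these coordinates, $z_i=pV(i)$ becomes $pe_i$ for $1\leq i\leq s$, and $z_{s+i}$ becomes the vector $\beta^{(i)}\in R^f$ whose entries are $-H_{j,s+i}$ in position $j$ for $1\leq j\leq s$, a $1$ in position $s+i$, and zeros elsewhere. It suffices to show that $\mathcal{B}=\{pe_1,\dots,pe_s,\beta^{(1)},\dots,\beta^{(f-s)}\}$ is an $R$-basis of $Y$.

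First I would verify $\mathcal{B}\subseteq Y$. For the $pe_i$, this is immediate. For $\beta^{(i)}$, I use that $\overline{R}$ is a field (since $p$ is prime), so the row operations transforming $\overline{K}$ into its reduced row echelon form $\overline{H}$ are left multiplications by invertible matrices; hence $\overline{K}$ and $\overline{H}$ have the same nullspace in $\overline{R}^f$. Since $\overline{H}$ has block form $[I_s\mid\overline{H}']$ on its first $s$ rows and zeros below, the standard nullspace generators of $\overline{H}$ are precisely the reductions $\overline{\beta^{(i)}}$, so each $\beta^{(i)}\in Y$.

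For spanning, given $\alpha\in Y$, set $\gamma=\alpha-\sum_{i=1}^{f-s}\alpha_{s+i}\beta^{(i)}$. By direct inspection, the coordinates of $\gamma$ in positions $s+1,\dots,f$ vanish, while the coordinate in position $j$ (for $1\leq j\leq s$) equals $\alpha_j+\sum_{i=1}^{f-s}\alpha_{s+i}H_{j,s+i}$, which is congruent to $(H\alpha)_j$ modulo $p$ because $H_{j,k}\equiv\delta_{j,k}\pmod{p}$ for $1\leq j,k\leq s$; since $\alpha\in Y$ forces $\overline{H\alpha}=0$, this coordinate is divisible by $p$. Hence $\gamma$ is an $R$-linear combination of $pe_1,\dots,pe_s$, and $\alpha$ lies in the $R$-span of $\mathcal{B}$. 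Linear independence is immediate: any relation $\sum_{j}c_j(pe_j)+\sum_i d_i\beta^{(i)}=0$ forces $d_i=0$ by inspection of the last $f-s$ coordinates, and then $pc_j=0$ in the first $s$ coordinates yields $c_j=0$ since $R$ is an integral domain.

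No serious obstacle is anticipated; the proof is essentially bookkeeping combined with a standard use of reduced row echelon form over the residue field $\overline{R}$. The one subtlety worth flagging is that $H$ is only a lift of $\overline{H}$, so its rows indexed by $j>s$ need not vanish and its first $s$ rows are only $[I_s\mid\overline{H}']$ modulo $p$; this is harmless because the nullspace condition defining $Y$ is itself only modulo $p$, and any two lifts $H$ yield $z_{s+i}$'s differing by $R$-linear combinations of $z_1,\dots,z_s$, which does not affect the basis property.
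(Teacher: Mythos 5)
Your proof is correct, but it follows a genuinely different route from the paper's. You verify directly, in the coordinates of $R^f\cong W$, that the vectors $pe_1,\dots,pe_s,\beta^{(1)},\dots,\beta^{(f-s)}$ lie in $Y$, span $Y$ (via the reduction $\gamma=\alpha-\sum_i\alpha_{s+i}\beta^{(i)}$, whose first $s$ coordinates are divisible by $p$ precisely because $\overline{H}\overline{\alpha}=0$ and the top-left $s\times s$ block of $\overline H$ is the identity), and are $R$-independent. The paper instead stops after checking membership $z_1,\dots,z_f\in N$ and then compares indices: the change-of-basis matrix $G$ from $V(1),\dots,V(f)$ to $z_1,\dots,z_f$ is upper triangular with determinant $p^s$, while $W/N\cong C(\overline K)$ is an $\overline R$-vector space of dimension $s$, so $N$ also has index $p^s$ in $W$; equality of the two sublattices then follows from \cite[Lemma 4.3]{QSS}. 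Your argument is more self-contained and elementary --- it does not invoke the external lemma from \cite{QSS} nor the computation of $W/N$ from \S\ref{main}, and it exhibits the coefficients explicitly --- at the cost of a longer bookkeeping computation; the paper's argument is shorter given the machinery already in place but hides the spanning step inside the index comparison. Your closing remark that different lifts $H$ of $\overline H$ change the $z_{s+i}$ only by $R$-combinations of $z_1,\dots,z_s$ is a worthwhile observation not made in the paper.
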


\begin{proof} Given $\alpha=(\alpha_1,\dots,\alpha_f)\in R^f$, we have
$$
\overline{K}\overline{\alpha}=0\Leftrightarrow \overline{H}\overline{\alpha}=0
$$
and therefore (\ref{cong}) gives
$$
\alpha_1 V(1)+\cdots+\alpha_f V(f)\in N\Leftrightarrow \overline{H}\overline{\alpha}=0.
$$
Our choice of $H$ ensures that $z_1,\dots,z_f\in N$.
Let $G\in M_f(R)$ be the matrix whose columns are the coefficients of $z_1,\dots,z_f$ relative to the basis $V(1),\dots,V(f)$ of $W$.
Then $|G|=p^s$. On the other hand, $W/N\cong C(\overline{K})$ is a vector space over $\overline{R}$ of dimension $s$,
so there is a basis $\{u_1,\dots,u_f\}$ of $W$ such that $\{pu_1,\dots,pu_{s},u_{s+1},\dots,u_{f}\}$ is a basis of $N$.
It follows from \cite[Lemma 4.3]{QSS} that $\{z_1,\dots,z_f\}$ is a basis of $N$.
\end{proof}


\section{Examples}\label{exa}

(1) Consider the case $R=\Z$, $n=4$, and
$$
A=\left(\begin{array}{cccc}
2 & 3 & 5 & 4\\
3 & -5 & 2 & -7\end{array}\right).
$$
Let $B$ (resp. $C$) be the $2\times 2$ submatrix formed by the first (resp. last) two  columns of $A$ and let $D$ be
the adjoint of $B$. Then $|B|=-19$, which implies $|D|=-19$ and $|DC|\equiv |D||C|\equiv 0\mod 19$.
Multiplying $A$ on the left by $C$, we obtain the the following reduced matrix $Z$ associated to $A$:
$$
Z=\left(\begin{array}{cc} d I_2 & K\end{array}\right)=\left(\begin{array}{cccc}
-19 & 0 & -31 & 1\\
0 & -19 & -	11 & -26\end{array}\right).
$$
The reduction of $K$ modulo 19 has rank $s=1$,
since $|K|\equiv 0\mod 19$ and not all entries of $K$ are divisible by 19.
In this case, the formulas from \S\ref{dp} give
$S/19 W\cong \Z/19\Z$ and $W/S\cong \Z/19\Z$. We can use this information to obtain a
$\Z$-basis of $S$.
Indeed, by \S\ref{wpq} the vectors
$$
V(1)=(-31/19,-11/19,1,0),\; V(2)=(1/19,-26/19,0,1)
$$
form a $\Q$-basis of $T$. Moreover,  it is clear that if $\alpha_1,\alpha_2\in\Q$, then $\alpha_1 V(2)+\alpha_2 V(2)\in S$
if and only if $\alpha_1,\alpha_2\in\Z$ and
$$
-31\alpha_1+\alpha_2\equiv 0\mod 19,\; 11\alpha_1+26\alpha_2\equiv 0\mod 19.
$$
The second equation is redundant since $|K|\equiv 0\mod 19$, and the first equation is equivalent to
$$
\alpha_2\equiv 12\alpha_1\mod 19.
$$
This yields the following vectors from $S$:
$$
z_1=19 V(2)=(1,-26,0,19),\; z_2=V(1)+12V(2)=(-1,-17,1,12).
$$
The $2\times 2$ matrix formed by coordinates of $z_1,z_2$ relative to $V(1),V(2)$ is
$$
\left(\begin{array}{ccc}
0 & 1\\
19 & 12
\end{array}\right).
$$
This implies $W/(Rz_1\oplus Rz_2)\cong \Z/19\Z$, whence $\{z_1,z_2\}$ is a basis of $S$.

(2) Consider the case $R=\Z$, $r=3$, $n=6$, and
$$
A=\left(\begin{array}{cccccc}
1 & 1 & 1 & 1 & 2 & 3\\
1 & 3 & 7 & 4 & 5 & 6\\
1 & 9 & 49 & 7 & 8 & 9
\end{array}\right).
$$
Let $B$ be the $3\times 3$ submatrix formed by the first three columns of $A$. Then $B$
is a Vandermonde matrix with determinant $48$. Let $C$ be
the adjoint of~$B$. Then
$$
C=\left(\begin{array}{ccc}
84 & -40 & 4 \\
-42 & 48 & -6 \\
6 & -8 & 2
\end{array}\right).
$$
Multiplying $A$ on the left by $C$, we obtain the matrix
$$
\left(\begin{array}{cccccc}
48 & 0 & 0 & -48 & 0 & 48\\
0 & 48 & 0 & 108 & 108 & 108\\
0 & 0 & 48 & -12 & -12 & -12
\end{array}\right).
$$
Dividing every entry by 12, we obtain the following reduced matrix $Z$ associated to $A$:
$$
Z=\left(\begin{array}{cc} d I_3 & K\end{array}\right)=\left(\begin{array}{cccccc}
4 & 0 & 0 & -4 & 0 & 4\\
0 & 4 & 0 & 9 & 9 & 9\\
0 & 0 & 4 & -1 & -1 & -1
\end{array}\right).
$$
Thus (\ref{vi}) produces a free submodule $M$ of $S$ of rank 3 with basis
$$
u_1=(4,-9,1,4,0,0), u_2=(0,-9,1,0,4,0), u_3=(-4,-9,1,0,0,4).
$$
The Smith Normal Form of $K$ is $\mathrm{diag}(1,4,0)$. Here $d=4$, $f=3$ and $t=2$,
so according to (\ref{formula}), we have
$$
S/M\cong \Z/4\Z\oplus \Z/4\Z.
$$
We look for $a,b,c\in\Z$ such that
$$
v=\frac{au_1+bu_2+cu_3}{4}\in \Z^3\text{ but }2v\notin M.
$$
This translates into
$$
a+b+c\equiv 0\mod 4\text{ and }(a,b,c)\notin (2\Z)^3.
$$
Taking $(a,b,c)=(1,-1,1)$ we find the following vectors from $S$:
$$
z_1=(u_1-u_2)/4,\; z_2=(u_2-u_3)/4,\; u_3.
$$
We clearly have
$$
(\Z z_1\oplus \Z z_2\oplus \Z z_3)/M\cong \Z/4\Z\oplus \Z/4\Z,
$$
which implies that $\{z_1,z_2,z_3\}$ is a basis of $S$.

(3) Consider the case $R=\Z$, $r=3$, $n=6$, and
$$
A=\left(\begin{array}{cccccc}
12 & 24 & 36 & -4 & 12 & 44\\
24 & 36 & 12 & -2 & 10 & 20\\
36 & 12 & 24 & 0 & 20 & 44
\end{array}\right).
$$
Multiplying $A$ on the left by a suitable matrix from $\GL_3(\Q)$ yields the the following
reduced matrix associated to $A$:
$$
Z=\left(\begin{array}{cc} d I_3 & K\end{array}\right)=\left(\begin{array}{cccccc}
12 & 0 & 0 & 1 & 5 & 6\\
0 & 12 & 0 & -1 & -1 & -2\\
0 & 0 & 12 & -1 & 3 & 14
\end{array}\right).
$$
Following (\ref{vi}), we obtain a free submodule $M$ of $S$ of rank 3 having basis
$$
u_1=(-1,1,1,12,0,0), u_2=(-5,1,-3,0,12,0), u_3=(-6,2,-14,0,0,12).
$$
The Smith Normal Form of $K$ is $\mathrm{diag}(1,4,12)$. We have $d=12$, $f=3$ and $t=3$,
so (\ref{formula}) yields
$$
S/M\cong \Z/4\Z\oplus \Z/12\Z.
$$
We use (\ref{vg}) to obtain the vector
$$
v=\frac{-u_1-u_2+u_3}{12}=(0,0,-1,-1,-1,1)\in S.
$$
Then  $\{v,u_2,u_3\}$ is a basis of a module $P$ containing $M$ such that $S/P\cong\Z/4\Z$.
Applying (\ref{vg}) once again yields the vector
$$
w=\frac{4v+2u_2+u_3}{4}=(-4,1,-6,-1,5,4)\in S
$$
and the basis $\{w,v,u_2\}$ of $S$.


\end{document}